\newcommand\eps{\varepsilon}
\newcommand\R{{\mathbf{R}}}
\newcommand\C{{\mathbf{C}}}
\newcommand\Z{{\mathbf{Z}}}
\theoremstyle{plain}
  \newtheorem{theorem}[subsection]{Theorem}
  \newtheorem{lemma}[subsection]{Lemma}
  \newtheorem{corollary}[subsection]{Corollary}
\theoremstyle{remark}
  \newtheorem{remark}[subsection]{Remark}
\theoremstyle{definition}
  \newtheorem{definition}[subsection]{Definition}
\begin{document}

\title[]{Maximizers for the Strichartz inequalities and the Sobolev-Strichartz inequalities for the Schr\"odinger equation}
\author{Shuanglin Shao}
\address{Department of Mathematics, UCLA, CA 90095}
\email{slshao@math.ucla.edu}

\subjclass[2000]{35Q55}

\vspace{-0.1in}
\begin{abstract}
In this paper, we first show that there exists a maximizer for the
non-endpoint Strichartz inequalities for the Schr\"odinger
equation in all dimensions based on the recent linear profile
decomposition results. We then present a new proof of the linear
profile decomposition for the Schr\"oindger equation with initial
data in the homogeneous Sobolev space; as a consequence, there
exists a maximizer for the Sobolev-Strichartz inequality.
\end{abstract}
\maketitle

\section{Introduction}
We consider the free Schr\"odinger equation
\begin{equation}\label{eq:free-schrodinger}
i\partial_t u+\Delta u=0,
\end{equation}
with initial data $u(0,x)=u_0(x)$ where $u:\R\times \R^d\to \C$ is
a complex-valued function and $d\ge 1$. We can denote the solution $u$ by
using the Schr\"odinger evolution operator $e^{it\Delta}$:
\begin{equation}\label{eq:schr-evolution}
u(t,x):=e^{it\Delta}u_0(x):= \int_{\R^d}
e^{ix\cdot\xi-it|\xi|^2}\hat{u}_0(\xi)d\xi,
\end{equation} where $\hat{u}_0$ is the spatial Fourier transform defined via
\begin{equation}\label{eq:def-fourier-transfm}
\hat{u}_0(\xi):=\int_{\R^d}e^{-ix\cdot\xi}u_0(x)dx,
\end{equation} where $x\cdot\xi$ (abbr. $x\xi$) denotes the Euclidean inner product of $x$ and $\xi$ in the spatial space $\R^d$. Formally the solutions to this equation have a conserved mass
\begin{equation}\label{eq:mass-consv}
\int_{\R^d} |u(t,x)|^2 dx.
\end{equation}
A family of well-known inequalities, the Strichartz inequalities, is
associated with \eqref{eq:free-schrodinger}, which is very useful
in the linear and nonlinear dispersive equations. It asserts that, for any $u_0\in L^2_x(\R^d)$,  there exists a constant $C_{d,q,r}>0$ such that
\begin{equation}\label{eq:Strichartz-inequ.}
\|e^{it\Delta}u_0\|_{L^q_tL^r_x(\R\times \R^d)}\le C_{d,q,r}\|u_0\|_{L^2_x(\R^d)}
\end{equation}
holds if and only if $q$ and $r$ are Schr\"odinger admissible, i.e., \begin{equation}\label{eq:schrod-admiss}
\frac 2q+\frac dr=\frac d2, \quad (q,r,d)\neq(2,\infty, 2),\quad q, r\ge2.
\end{equation}
 The pairs $(q,r)=(2, \frac {2d}{d-2})$ when $d\ge 3$ or $(q,r)=(4,\infty)$ when $d=1$, \eqref{eq:Strichartz-inequ.} is referred to as  the `` endpoint" estimate, otherwise the ``non-endpoint" estimate for the rest pairs. It has  a long history to establish \eqref{eq:Strichartz-inequ.} for all Schr\"odinger admissible pairs in \eqref{eq:schrod-admiss} expect when $(q,r)=(\infty,2)$, in which case it follows from \eqref{eq:mass-consv}. For the symmetric exponent $q=r=2+\frac 4d$, Strichartz established this inequality in \cite{Strichartz:1977} which in turn had precursors in \cite{Tomas:1975:restrict}. The non-endpoints were established by Ginibre and Velo \cite{Ginibre-Velo:1992:non-endpoint-Strichartz-inequality}, see also \cite[Theorem 2.3]{Tao:2006-CBMS-book} for a proof; the delicate endpoints were treated by Keel and Tao \cite{Keel-Tao:1998:endpoint-strichartz}. When $(q,r,d)=(2,\infty, 2)$, it has been known to fail, see e.g., \cite{Montgomery:1998:schrod} and \cite{Tao:2006:counterexample-bilinear-strichartz}.

A close relative of the Strichartz inequality for the Schr\"odinger equation is the
Sobolev-Strichartz inequality: for any $2\le q<\infty$, and $2\le r<\infty$ and $u_0\in \dot{H}^{s(q,r)}_x(\R^d)$ with $s(q,r):=\frac d2-\frac 2q-\frac dr>0$, there exists a constant $C_{d,q,r}>0$ such that
\begin{equation}\label{eq:sobolev-strichartz}
\|e^{it\Delta}u_0\|_{L^q_tL^r_x(\R\times \R^d)}\le C_{d,q,r} \|u_0\|_{\dot{H}_x^{s(q,r)}(\R^d)},
\end{equation} which can be proven by using the usual Sobolev embedding and the Strichartz inequality \eqref{eq:Strichartz-inequ.}.

In this paper, we are interested in the existence of maximizers for the Strichartz inequality \eqref{eq:Strichartz-inequ.} and the Sobolev-Strichartz inequality \eqref{eq:sobolev-strichartz}, i.e., functions which optimize \eqref{eq:Strichartz-inequ.} and \eqref{eq:sobolev-strichartz} in the sense that they become equal.

The answer to the former is confirmed for their non-endpoint estimates by an application of a recent powerful result, the profile decomposition for Schr\"odinger equations, which was developed in \cite{Bourgain:1998:refined-Strichartz-NLS}, \cite{Merle-Vega:1998:profile-schrod}, \cite{Carles-Keraani:2007:profile-schrod-1d}, \cite{Begout-Vargas:2007:profile-schrod-higher-d} and had many applications in nonlinear dispersive equations, see \cite{Killip-Visan:2008:clay-lecture-notes} and the reference within. The problem of the existence of maximizers and of determining them explicitly for the symmetric Strichartz inequality when $q=r=2+\frac 4d$ has been intensively studied. Kunze \cite{Kunze:2003:maxi-strichartz-1d} treated the $d=1$ case and showed that maximizers exist by an elaborate concentration-compactness method; when $d=1,2$, Foschi \cite{Foschi:2007:maxi-strichartz-2d} explicitly determined the best constants and showed that the only maximizers are Gaussians by using the sharp Cauchy-Schwarz inequality and the space-time Fourier transform; Hundertmark and Zharnitsky \cite{Hundertmark-Zharnitsky:2006:maximizers-Strichartz-low-dimensions} independently obtained this result by an interesting representation formula of the Strichartz inequalities; recently, Carneiro \cite{Carneiro:2008:sharp-strichartz-norm} proved a sharp Strichartz-type inequality by following the arguments in \cite{Hundertmark-Zharnitsky:2006:maximizers-Strichartz-low-dimensions} and found its maximizers, which derives the results in \cite{Hundertmark-Zharnitsky:2006:maximizers-Strichartz-low-dimensions} as a corollary when $d=1,2$; Very recently, Bennett, Bez, Carbery and Hundertmark \cite{Bennett-Bez-Carbery-Hundertmark:2008:heat-flow-of-strichartz-norm} offered a new proof to determine the best constants by using the method of heat-flow.

The answer to the latter is true as well. The proof follows almost
along similar lines as in the $L^2_x$ case if we have an
analogous profile decomposition for initial data in the
homogeneous Sobolev spaces. We offer a new proof for this fact,
which we have not seen in the literature.

\subsection{}In this subsection, we investigate the existence of maximizers for the non-endpoint Strichartz inequalities. To begin, we recall the profile decomposition result in
\cite{Begout-Vargas:2007:profile-schrod-higher-d} in the notation
of the symmetry group which preserves the mass and the Strichartz
inequalities.
\begin{definition}[Mass-preserving symmetry group]\label{def:symmetry} For any phase $\theta\in \R/2\pi\Z$, scaling parameter $h_0>0$,
frequency $\xi_0\in \R^d$, space and time  translation parameters $x_0, t_0\in \R^d$, we define the unitary transformation
$g_{\theta, h_0,\xi_0,x_0,t_0}: L^2_x(\R^d)\to L^2_x(\R^d)$ by the formula
\begin{equation}\label{eq:gp-elments}
[g_{\theta,
h_0,\xi_0,x_0,t_0}\phi](x)=e^{i\theta}e^{ix\cdot\xi_0}e^{-it_0\Delta}[\frac
{1}{h_0^{d/2}}\phi(\frac {\cdot-x_0}{h_0})](x).
\end{equation}
We let $G$ be the collection of such transformations; $G$ forms a group.
\end{definition}
\begin{definition}\label{def:orthogonal}
For $j\neq k$, two sequences $\Gamma^j_n=(h^j_{n},\xi^j_n,
x^j_n,t^j_n)_{n\ge 1}$ and
$\Gamma^k_n=(h^k_n,\xi^k_n,x_k^n,t^k_n)_{n\ge 1}$ in $(0,\infty)
\times\R^d\times\R^d\times\R$ are said to be orthogonal if one of
the followings holds:
\begin{itemize}
\item $\lim_{n\to\infty}\left(\frac {h^k_n}{h^j_{n}}+\frac {h^j_n}{h^k_n}+h_n^j|\xi_n^j-\xi_n^k|\right)=\infty,$
\item $\lim_{n\to\infty}\left(\frac{|t_n^j-t_n^k|}{(h^j_n)^2}+\left|\frac{x^j_n-x^k_n}
{h^j_n}+\frac{t_n^k(\xi^k_n-\xi^j_n)}{h^j_n}\right|\right)=\infty.$
\end{itemize}
\end{definition}
We rephrase the linear profile decomposition theorem in
\cite{Begout-Vargas:2007:profile-schrod-higher-d} by  using the notation in Definition \ref{def:symmetry}.
\begin{theorem}\label{thm:lin-profile}
Let $\{u_n\}_{n\ge 1}$ be a bounded sequence in $L^2_x$. Then up to passing to a subsequence of $(u_n)_{n\ge 1}$, there exists a sequence of functions $\phi^j\in L^2_x$ and  group elements $(g_n^j)_{n\ge 1,j\ge 1}=g_{0,h_n^j,\xi_n^j,x_n^j,t_n^j}\in G$ with orthogonal $(h_n^j,\xi_n^j,x_n^j,t_n^j)$ such that for any $N\ge 1$, there exists $e_n^N\in L^2_x$,
\begin{equation}\label{eq:profile-decom}
u_n=\sum_{j=1}^{N}g_n^j(\phi^j)+e_n^N,
\end{equation}
with the error term having the asymptotically vanishing Strichartz
norm
\begin{equation}\label{eq:strich-err}
\lim_{N\to \infty}\lim_{n\to \infty}\|e^{it\Delta}e_n^N\|_{L^{2+4/d}_{t,x}}=0,
\end{equation} and the following orthogonality properties: for any $N\ge 1$,
\begin{equation}\label{eq:pf-L2-ortho}
\lim_{n\to\infty}
\left(\|u_n\|^2_{L^2_x}-(\sum_{j=1}^N\|\phi^j\|^2_{L^2_x}+\|e_n^N\|^2_{L^2_x})\right)=0,
\end{equation} for $j\neq k$,
\begin{equation}\label{eq:pf-Stri-ortho}
\lim_{n\to \infty}\|e^{it\Delta}g_n^j(\phi^j)e^{it\Delta}g_n^k(\phi^k)\|_{L^{1+2/d}_{t,x}}=0,
\end{equation}
\begin{equation}\label{eq:pf-L2-wk-ortho-1}
\lim_{n\to \infty}\langle g_n^j(\phi^j),g_n^k(\phi^k)\rangle_{L^2_x}=0,
\end{equation}for any $1\le j\le N$,
\begin{equation}\label{eq:pf-L2-wk-ortho-2}
\lim_{n\to \infty}\langle g_n^j(\phi^j),e_n^N\rangle_{L^2_x}=0.
\end{equation}
\end{theorem}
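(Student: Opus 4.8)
The plan is to obtain Theorem~\ref{thm:lin-profile} as a reformulation of the profile decomposition of B\'egout and Vargas \cite{Begout-Vargas:2007:profile-schrod-higher-d}, whose statement is phrased directly at the level of the linear flow $e^{it\Delta}u_n$ in terms of concentration cores. First I would recall their conclusion: after passing to a subsequence one can write
\[
e^{it\Delta}u_n=\sum_{j=1}^N (\lambda_n^j)^{-d/2}\, e^{i x\xi_n^j}\,\big(e^{i s_n^j\Delta}\psi^j\big)\big(\tfrac{x-y_n^j(t)}{\lambda_n^j}\big)+\widetilde e_n^N,
\]
with $s_n^j=(t-t_n^j)/(\lambda_n^j)^2$, the center $y_n^j(t)$ incorporating the Galilean drift, pairwise orthogonal cores $(\lambda_n^j,\xi_n^j,x_n^j,t_n^j)$, asymptotically vanishing $L^{2+4/d}_{t,x}$-norm of $\widetilde e_n^N$, and the Pythagorean mass identity. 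The bookkeeping step is then to check that inserting the factor $e^{-it_0\Delta}$ into the definition of $G$ in Definition~\ref{def:symmetry} is exactly what converts these ``bubbles'' into bubbles sitting on the initial data: with $h_n^j=\lambda_n^j$, $\phi^j=\psi^j$, $\theta=0$ and $t_n^j,\xi_n^j,x_n^j$ read off from the cores, one has $e^{it\Delta}g_n^j(\phi^j)$ equal to the $j$-th B\'egout--Vargas bubble, the remainder $e_n^N$ is $e^{-it\Delta}$ of $\widetilde e_n^N$ at $t=0$ giving \eqref{eq:profile-decom}, and the two alternatives of Definition~\ref{def:orthogonal} are precisely their orthogonality of cores rewritten in the normalization $h=\lambda$. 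Thus \eqref{eq:strich-err} and the Pythagorean expansion \eqref{eq:pf-L2-ortho} are carried over verbatim.

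The remaining relations \eqref{eq:pf-L2-wk-ortho-1} and \eqref{eq:pf-L2-wk-ortho-2} I would deduce from the extraction mechanism itself. Recall that the profiles in \cite{Begout-Vargas:2007:profile-schrod-higher-d} arise by iteration driven by a refined Strichartz inequality (bounding $\|e^{it\Delta}f\|_{L^{2+4/d}_{t,x}}$ by a geometric mean of $\|f\|_{L^2_x}$ and a weaker Besov-type quantity that detects a single scale/frequency/translation bubble): as long as the current error has non-vanishing Strichartz norm, it must concentrate along some core, one removes that core by the corresponding symmetry and extracts $\phi^j$ as the weak $L^2_x$-limit. Weak convergence of the remainder then yields \eqref{eq:pf-L2-wk-ortho-2}, and the orthogonality of the cores yields \eqref{eq:pf-L2-wk-ortho-1}, since two bubbles with orthogonal parameters decouple weakly after undoing either symmetry; summing the resulting almost-orthogonal splitting of $\|u_n\|_{L^2_x}^2$ forces $\sum_j\|\phi^j\|_{L^2_x}^2\le\limsup_n\|u_n\|_{L^2_x}^2<\infty$, which is what makes the process terminate in the limit and produces \eqref{eq:strich-err}.

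The one identity that is not purely notational is the bilinear Strichartz orthogonality \eqref{eq:pf-Stri-ortho}, which I would prove by hand (it may already be contained in \cite{Begout-Vargas:2007:profile-schrod-higher-d}, but the direct argument is short). Since $2(1+2/d)=2+4/d$, H\"older gives $\|fg\|_{L^{1+2/d}_{t,x}}\le\|f\|_{L^{2+4/d}_{t,x}}\|g\|_{L^{2+4/d}_{t,x}}$, so by the Strichartz bound and density it suffices to treat $\phi^j,\phi^k$ Schwartz with $\widehat{\phi^j},\widehat{\phi^k}\in C_c^\infty$; using the spacetime symmetries of $e^{it\Delta}$ underlying the action of $G$ one normalizes $h_n^j\equiv1$, $\xi_n^j\equiv0$, $x_n^j\equiv0$, $t_n^j\equiv0$, after which Definition~\ref{def:orthogonal} asserts that the transformed parameters of the $k$-th bubble leave every compact set. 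Depending on which alternative holds, the decoupling then comes either from transversality/disjointness of the relevant caps of the (dilated, translated) paraboloids carrying the spacetime Fourier supports, from scale separation via a high–low bilinear estimate when $h_n^k/h_n^j+h_n^j/h_n^k\to\infty$, or from disjointness of the physical-space supports of the two wave packets via the dispersive decay $\|e^{it\Delta}f\|_{L^\infty_x}\lesssim|t|^{-d/2}\|f\|_{L^1_x}$ and the rapid spatial decay of $e^{it\Delta}\phi$ for Schwartz $\phi$. The main obstacle is precisely to organize this case analysis uniformly in $n$ and to make the ``disjoint support'' heuristic quantitative; everything else is the standard profile-decomposition machinery imported from \cite{Begout-Vargas:2007:profile-schrod-higher-d}.
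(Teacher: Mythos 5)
The paper offers no proof of this theorem: it is quoted from B\'egout--Vargas \cite{Begout-Vargas:2007:profile-schrod-higher-d} after a purely notational translation into the group $G$ of Definition \ref{def:symmetry}, which is exactly the bookkeeping you carry out in your first paragraph. Your additional sketches — including the by-hand argument for \eqref{eq:pf-Stri-ortho}, whose case analysis you yourself leave unfinished — are therefore unnecessary, since the bilinear decoupling and the weak-limit orthogonality relations are already part of the cited decomposition (as the paper's own remark on \eqref{eq:Stri-ortho} confirms).
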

The first main result in this paper concerns on the existence of maximizers for the symmetric Strichartz inequality $L^2_x\to L^{2+4/d}_{t,x}$.
\begin{theorem}\label{thm:Stri-max}
There exists a maximizing function $\phi\in L^2_x$ such that,
$$\|e^{it\Delta}\phi\|_{L^{2+\frac 4{d}}_{t,x}}=S\|\phi\|_{L^2_x}$$ with $S:=\sup\{{\|e^{it\Delta}u_0\|_{{L^{2+\frac {4}{d}}_{t,x}}}}:\|u_0\|_{L^2_x}=1\}$
being the sharp constant.
\end{theorem}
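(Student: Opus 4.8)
The plan is to take a maximizing sequence $(u_n)_{n\ge 1}$ with $\|u_n\|_{L^2_x}=1$ and $\|e^{it\Delta}u_n\|_{L^{2+4/d}_{t,x}}\to S$, apply the linear profile decomposition of Theorem \ref{thm:lin-profile}, and show that exactly one profile carries all the mass (asymptotically), which will then be the desired maximizer. First I would pass to the subsequence supplied by Theorem \ref{thm:lin-profile}, obtaining profiles $\phi^j$, orthogonal group elements $g_n^j$, and errors $e_n^N$ with \eqref{eq:profile-decom}--\eqref{eq:pf-L2-wk-ortho-2}. The key computation is to expand $\|e^{it\Delta}u_n\|_{L^{2+4/d}_{t,x}}^{2+4/d}$ using \eqref{eq:profile-decom}. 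Since $2+4/d\ge 2$ is an even integer only when $d\in\{1,2,4\}$, I will instead argue more robustly: using \eqref{eq:strich-err} the error contributes negligibly in the limit $N\to\infty$, $n\to\infty$, and using the cross-term estimate \eqref{eq:pf-Stri-ortho} together with the fact that each $g_n^j$ is a symmetry preserving the Strichartz norm (so $\|e^{it\Delta}g_n^j(\phi^j)\|_{L^{2+4/d}_{t,x}}=\|e^{it\Delta}\phi^j\|_{L^{2+4/d}_{t,x}}$), one obtains the asymptotic orthogonality of the Strichartz norms:
\begin{equation*}
\lim_{N\to\infty}\lim_{n\to\infty}\Bigl\|e^{it\Delta}\sum_{j=1}^N g_n^j(\phi^j)\Bigr\|_{L^{2+4/d}_{t,x}}^{2+4/d}=\sum_{j\ge 1}\|e^{it\Delta}\phi^j\|_{L^{2+4/d}_{t,x}}^{2+4/d}.
\end{equation*}
The standard way to handle the non-even exponent is the elementary inequality $\bigl|\,|a+b|^p-|a|^p-|b|^p\,\bigr|\lesssim_p |a|^{p-1}|b|+|a||b|^{p-1}$ applied iteratively to the $N$ profiles plus error, followed by H\"older; the mixed products are controlled by \eqref{eq:pf-Stri-ortho} after noting $\|fg\|_{L^{1+2/d}}$ controls $\|\,|f|^{p-1}|g|\,\|$-type quantities via interpolation with the uniform Strichartz bounds on each $e^{it\Delta}g_n^j(\phi^j)$. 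This is the step I expect to be the main technical obstacle, and I would isolate it as a lemma.

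Granting this, combine it with \eqref{eq:pf-L2-ortho}, which gives $\sum_{j\ge 1}\|\phi^j\|_{L^2_x}^2\le 1$. By the definition of $S$, $\|e^{it\Delta}\phi^j\|_{L^{2+4/d}_{t,x}}\le S\|\phi^j\|_{L^2_x}$ for every $j$, hence
\begin{equation*}
S^{2+4/d}=\lim_{n\to\infty}\|e^{it\Delta}u_n\|_{L^{2+4/d}_{t,x}}^{2+4/d}=\sum_{j\ge 1}\|e^{it\Delta}\phi^j\|_{L^{2+4/d}_{t,x}}^{2+4/d}\le S^{2+4/d}\sum_{j\ge 1}\|\phi^j\|_{L^2_x}^{2+4/d}\le S^{2+4/d}\Bigl(\sup_{j}\|\phi^j\|_{L^2_x}\Bigr)^{4/d}\sum_{j\ge 1}\|\phi^j\|_{L^2_x}^{2}.
\end{equation*}
Since $\sum_j\|\phi^j\|_{L^2_x}^2\le 1$ and $\sup_j\|\phi^j\|_{L^2_x}\le 1$, the outer two factors are at most $1$, so every inequality must be an equality. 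In particular $\sup_j\|\phi^j\|_{L^2_x}=1$, which forces exactly one profile, say $\phi^1$, to be nonzero with $\|\phi^1\|_{L^2_x}=1$ and all other $\phi^j=0$; moreover equality in $\|e^{it\Delta}\phi^1\|_{L^{2+4/d}_{t,x}}\le S\|\phi^1\|_{L^2_x}$ then says $\phi:=\phi^1$ is a maximizer. (One should check $S>0$ so that $\phi^1\ne 0$ is genuinely forced; this is immediate from \eqref{eq:Strichartz-inequ.} being a nontrivial inequality, e.g. testing on a Gaussian.) Finally, since the $g_n^j$ are unitary on $L^2_x$, $\|\phi\|_{L^2_x}=1$ as required, completing the proof. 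The only delicate point is the orthogonality lemma for the $L^{2+4/d}$ norm; everything else is a short convexity argument.
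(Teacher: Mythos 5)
Your proposal is correct and follows essentially the same route as the paper: maximizing sequence, linear profile decomposition, asymptotic decoupling of the $L^{2+4/d}_{t,x}$ norms, then the convexity argument $\sum_j\|e^{it\Delta}\phi^j\|^{2+4/d}\le S^{2+4/d}\bigl(\sup_j\|\phi^j\|_{L^2_x}\bigr)^{4/d}\sum_j\|\phi^j\|_{L^2_x}^2$ forcing a single unit-mass profile. The only difference is that you propose to prove the decoupling as a two-sided equality, whereas your argument (and the paper's) only needs the one-sided inequality $\lim_n\|\sum_j e^{it\Delta}g_n^j(\phi^j)\|^{2+4/d}\le\sum_j\lim_n\|e^{it\Delta}\phi^j\|^{2+4/d}$, which is exactly \eqref{eq:Stri-ortho} quoted from \cite{Begout-Vargas:2007:profile-schrod-higher-d}, so the technical lemma you flag is already available.
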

The proof of this theorem uses Theorem \ref{thm:lin-profile} and the following crucial inequality in
\cite{Begout-Vargas:2007:profile-schrod-higher-d}:
 for any $N\ge 1$,
\begin{equation}\label{eq:Stri-ortho}
\lim_{n\to\infty}\|\sum_{j=1}^Ne^{it\Delta}g_n^j(\phi^j)\|^{2+4/d}_{L^{2+4/d}_{t,x}}
\le
\sum_{j=1}^N\lim_{n\to\infty}\|e^{it\Delta}\phi^j\|^{2+4/d}_{L^{2+4/d}_{t,x}}.
\end{equation}
\begin{remark}
The inequality \eqref{eq:Stri-ortho} is a consequence of
\eqref{eq:pf-Stri-ortho} by an interpolation argument in \cite{Begout-Vargas:2007:profile-schrod-higher-d}, which we will generalize in the proof of Lemma \ref{le:ortho-other-Stri}. When $d=1,2$, one can actually show that \eqref{eq:Stri-ortho} is an equality by using the fact that $2+4/d$ is an even integer.
\end{remark}
The inequality \eqref{eq:Stri-ortho} suggests a way to obtaining
similar claims as in Theorem \ref{thm:Stri-max} for other
non-endpoint Strichartz inequalities if the following lemma were established.
\begin{lemma}\label{le:ortho-other-Stri}Let $q,r$ be non-endpoint Schr\"odinger admissible pairs and $N\ge 1$. If $q\ge r$,
\begin{equation}\label{eq:Stri-ortho-1}
\lim_{n\to\infty}\|\sum_{j=1}^Ne^{it\Delta}g_n^j(\phi^j)\|^r_{L_t^qL^r_x}
\le
\sum_{j=1}^N\lim_{n\to\infty}\|e^{it\Delta}\phi^j\|^r_{L_t^qL^r_x};
\end{equation}
if $q\le r$,
\begin{equation}\label{eq:Stri-ortho-2}
\lim_{n\to\infty}\|\sum_{j=1}^Ne^{it\Delta}g_n^j(\phi^j)\|^q_{L_t^qL^r_x}
\le
\sum_{j=1}^N\lim_{n\to\infty}\|e^{it\Delta}\phi^j\|^q_{L_t^qL^r_x}.
\end{equation}
\end{lemma}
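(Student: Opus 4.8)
\emph{Proof strategy.} Write $f_n^j:=e^{it\Delta}g_n^j(\phi^j)$. Every element of $G$ preserves the quantity $\|e^{it\Delta}\,\cdot\,\|_{L^q_tL^r_x}$ (a change of variables, using $\tfrac2q+\tfrac dr=\tfrac d2$), so $\|f_n^j\|_{L^q_tL^r_x}=\|e^{it\Delta}\phi^j\|_{L^q_tL^r_x}$ for every $n$; in particular the right sides of \eqref{eq:Stri-ortho-1}--\eqref{eq:Stri-ortho-2} do not depend on $n$, and it is enough to bound $\limsup_n$ of the left sides (one may pass to a subsequence at the end if a genuine limit is wanted). Also, by the Strichartz inequality \eqref{eq:Strichartz-inequ.} (and mass conservation for the pair $(\infty,2)$) together with the unitarity of each $g_n^j$ on $L^2_x$, one has $\sup_n\|f_n^j\|_{L^a_tL^b_x}\lesssim_{a,b}\|\phi^j\|_{L^2_x}$ for every admissible $(a,b)\neq(2,\infty,2)$; hence, by H\"older in $(t,x)$, $\sup_n\|f_n^jf_n^k\|_{L^{a_1}_tL^{b_1}_x}<\infty$ for every \emph{half-admissible} pair $(a_1,b_1)$, i.e. one with $\tfrac2{a_1}+\tfrac d{b_1}=d$ and $(2a_1,2b_1)$ non-endpoint. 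Since \eqref{eq:pf-Stri-ortho} says that $f_n^jf_n^k\to0$ in the particular half-admissible space $L^{1+2/d}_{t,x}$ when $j\neq k$, interpolation between this vanishing and the uniform bounds just noted gives $f_n^jf_n^k\to0$ in \emph{every} such $L^{a_1}_tL^{b_1}_x$ as $n\to\infty$.

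I will deduce both parts of the lemma from the single claim that, for $j\neq k$,
\begin{equation*}
\lim_{n\to\infty}\bigl\|\,|f_n^j|^{r-1}\,|f_n^k|\,\bigr\|_{L^{q/r}_tL^1_x}=0.\tag{$\star$}
\end{equation*}
The reduction uses the elementary pointwise inequality, valid for all $r\ge1$,
\begin{equation*}
\Bigl|\;\Bigl|\sum_{j=1}^{N}a_j\Bigr|^{\,r}-\sum_{j=1}^{N}|a_j|^r\;\Bigr|\ \le\ C_{N,r}\sum_{j\neq k}|a_j|^{r-1}|a_k|,
\end{equation*}
obtained by iterating the two-term estimate $\bigl|\,|a+b|^r-|a|^r-|b|^r\,\bigr|\lesssim_r|a|^{r-1}|b|+|a|\,|b|^{r-1}$. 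When $q\ge r$, the mixed norm $\|\cdot\|_{L^{q/r}_tL^1_x}$ is a genuine norm, so expanding $|\sum_j f_n^j|^r$ pointwise and applying the triangle inequality yields
\begin{equation*}
\Bigl\|\sum_{j=1}^{N}f_n^j\Bigr\|_{L^q_tL^r_x}^{\,r}\ \le\ \sum_{j=1}^{N}\|f_n^j\|_{L^q_tL^r_x}^{\,r}+C_{N,r}\sum_{j\neq k}\bigl\|\,|f_n^j|^{r-1}|f_n^k|\,\bigr\|_{L^{q/r}_tL^1_x},
\end{equation*}
and \eqref{eq:Stri-ortho-1} follows upon letting $n\to\infty$, using $(\star)$ and the first paragraph. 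When $q\le r$, the exponent $q/r$ is $\le1$, so one argues in the same way but replaces the triangle inequality by the subadditivity of $\tau\mapsto\tau^{q/r}$ (applied first at the spatial and then at the temporal integration) together with the inequality $\|\sum_i g_i\|_{L^p}^p\le\sum_i\|g_i\|_{L^p}^p$ for $0<p\le1$, and arrives at \eqref{eq:Stri-ortho-2}. (The pair $(q,r)=(\infty,2)$, for which $r-2=0$ and $(\star)$ is vacuous, is handled directly from \eqref{eq:pf-L2-wk-ortho-1} and mass conservation.)

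It remains to prove $(\star)$. Since $r\ge2$ for admissible pairs, write $|f_n^j|^{r-1}|f_n^k|=|f_n^jf_n^k|\cdot|f_n^j|^{r-2}$ and apply H\"older in $(t,x)$:
\begin{equation*}
\bigl\|\,|f_n^jf_n^k|\cdot|f_n^j|^{r-2}\,\bigr\|_{L^{q/r}_tL^1_x}\ \le\ \|f_n^jf_n^k\|_{L^{q_1}_tL^{r_1}_x}\cdot\bigl\|\,|f_n^j|^{r-2}\,\bigr\|_{L^{q_2}_tL^{r_2}_x},
\end{equation*}
where $\tfrac1{q_1}+\tfrac1{q_2}=\tfrac rq$ and $\tfrac1{r_1}+\tfrac1{r_2}=1$. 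By the first paragraph, the first factor tends to $0$ as soon as $(q_1,r_1)$ is half-admissible, that is $\tfrac2{q_1}+\tfrac d{r_1}=d$; the second factor equals $\|f_n^j\|_{L^{(r-2)q_2}_tL^{(r-2)r_2}_x}^{\,r-2}$, which is bounded uniformly in $n$ as soon as $((r-2)q_2,(r-2)r_2)$ is admissible, that is $\tfrac2{q_2}+\tfrac d{r_2}=\tfrac{d(r-2)}2$. A short computation shows that these two conditions are compatible with the two H\"older constraints \emph{exactly because} $(q,r)$ satisfies \eqref{eq:schrod-admiss}, and that together they leave one free parameter in the choice of exponents.

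The main --- and essentially only --- difficulty is then to verify that this free parameter can be chosen so that \emph{both} $(q_1,r_1)$ and $((r-2)q_2,(r-2)r_2)$ stay non-endpoint (and avoid the forbidden triple $(2,\infty,2)$ when $d=2$); concretely, that $1/q_1$ can be taken in a suitable nonempty open interval. This is where the \emph{non-endpoint} hypothesis on $(q,r)$ is used: for $d\ge3$ it forces $(r-2)(d-2)<4$ (equality holding only at the excluded endpoint $q=2$), which is precisely what keeps this interval from being empty, and the cases $d=1,2$ are analogous and easier. Once such a choice is fixed, $(\star)$ follows, and with it Lemma \ref{le:ortho-other-Stri}; the special case $q=r=2+4/d$ recovers \eqref{eq:Stri-ortho}.
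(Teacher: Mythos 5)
Your proposal is correct, and it rests on the same two engines as the paper's proof --- H\"older's inequality in mixed norms and the fact that the bilinear vanishing \eqref{eq:pf-Stri-ortho} upgrades by interpolation to $\|e^{it\Delta}g_n^j(\phi^j)\,e^{it\Delta}g_n^k(\phi^k)\|_{L^{a}_tL^{b}_x}\to 0$ for all ``half-admissible'' non-endpoint exponents (the paper's \eqref{eq:loc-3}) --- but the combinatorial reduction is genuinely different and cleaner. The paper expands $|\sum_j f_n^j|^r$ into a diagonal piece $A$ and an off-diagonal piece $B$, kills $B$ directly, and then processes $A$ through a somewhat delicate case analysis built on the integer part $s=[r-2]$ of the fractional exponent, with several rounds of ``expand, identify the repeated index, discard cross terms''; it also treats only \eqref{eq:Stri-ortho-1} and declares \eqref{eq:Stri-ortho-2} similar. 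You instead isolate \emph{all} cross terms at once with the elementary pointwise inequality $\bigl|\,|\sum_j a_j|^r-\sum_j|a_j|^r\,\bigr|\le C_{N,r}\sum_{j\neq k}|a_j|^{r-1}|a_k|$, which reduces both parts of the lemma to the single claim $(\star)$, with the $q\ge r$ and $q\le r$ cases differing only in whether one uses the triangle inequality or the subadditivity of $\tau\mapsto\tau^{q/r}$ at the outer integration. The exponent bookkeeping in your proof of $(\star)$ is sound: the four constraints are consistent precisely because $(q,r)$ is admissible, and the one free parameter can be kept in the open window guaranteed by $(d-2)(r-2)<4$, which is exactly the non-endpoint condition (note that your two lower bounds $\tfrac rq-\tfrac{r-2}{2}$ and $\tfrac{(d-2)(r-2)}{4}$ in fact coincide). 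What your route buys is a uniform treatment of both inequalities, no integer-part case analysis, and a transparent identification of where the non-endpoint hypothesis enters; what the paper's route buys is staying entirely within the specific template of \cite[Lemma 5.5]{Begout-Vargas:2007:profile-schrod-higher-d}. The remaining gaps you flag (the $d=1,2$ endpoint arithmetic, the degenerate pair $(\infty,2)$ handled via \eqref{eq:pf-L2-wk-ortho-1}) are routine.
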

Indeed, this is the case. Together with Theorem \ref{thm:lin-profile} again, this lemma yields the following corollary.
\begin{corollary}\label{coro:Stri-max}Let $q,r$ be non-endpoint Schr\"odinger admissible pairs.
There exists a maximizing function $\phi\in L^2_x$ such that,
$$\|e^{it\Delta}\phi\|_{L_t^qL^r_x}=S_{q,r}\|\phi\|_{L^2_x}$$ with $S_{q,r}:=\sup\{\|e^{it\Delta}u_0\|_{L_t^qL^r_x}:\|u_0\|_{L^2_x}=1\}$
being the sharp constant.
\end{corollary}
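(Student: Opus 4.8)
The plan is to transcribe, almost verbatim, the proof of Theorem~\ref{thm:Stri-max}, with the symmetric orthogonality inequality \eqref{eq:Stri-ortho} replaced by Lemma~\ref{le:ortho-other-Stri}. I would first dispose of the degenerate pair $(q,r)=(\infty,2)$: there $S_{\infty,2}=1$ by mass conservation \eqref{eq:mass-consv} and every normalized $u_0$ is a maximizer, so nothing is to be proved. For every other non-endpoint admissible pair one has $q>2$ and $2<r<\infty$, hence $p_*:=\min(q,r)>2$; this strict inequality is what will eventually force a single profile.

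Next I would fix a maximizing sequence $u_n\in L^2_x$ with $\|u_n\|_{L^2_x}=1$ and $\|e^{it\Delta}u_n\|_{L^q_tL^r_x}\to S_{q,r}$, pass to a subsequence, and apply Theorem~\ref{thm:lin-profile} to write $u_n=\sum_{j=1}^N g_n^j(\phi^j)+e_n^N$ with profiles $\phi^j\in L^2_x$, orthogonal parameters $(h_n^j,\xi_n^j,x_n^j,t_n^j)$, and errors $e_n^N$. Before the main estimate I would upgrade the error bound \eqref{eq:strich-err} from $L^{2+4/d}_{t,x}$ to the norm $L^q_tL^r_x$ that is actually needed: by \eqref{eq:pf-L2-ortho} one has $\sup_{n,N}\|e_n^N\|_{L^2_x}<\infty$, so by the Strichartz inequality \eqref{eq:Strichartz-inequ.} the family $e^{it\Delta}e_n^N$ is bounded in every admissible mixed norm; choosing a non-endpoint admissible pair $(q_1,r_1)$ for which $(1/q,1/r)$ is a convex combination of $(1/(2+4/d),1/(2+4/d))$ and $(1/q_1,1/r_1)$ and interpolating mixed-norm Lebesgue spaces gives $\lim_{N\to\infty}\lim_{n\to\infty}\|e^{it\Delta}e_n^N\|_{L^q_tL^r_x}=0$.

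Then comes the main estimate. Fixing $N$, the triangle inequality in $L^q_tL^r_x$ together with the error bound just obtained gives
\[
\limsup_{n\to\infty}\|e^{it\Delta}u_n\|_{L^q_tL^r_x}\le \limsup_{n\to\infty}\Bigl\|\sum_{j=1}^N e^{it\Delta}g_n^j(\phi^j)\Bigr\|_{L^q_tL^r_x}+\eps_N,\qquad \eps_N\to0.
\]
Raising this to the power $p_*$, applying Lemma~\ref{le:ortho-other-Stri} (in the form \eqref{eq:Stri-ortho-1} when $q\ge r$ and \eqref{eq:Stri-ortho-2} when $q\le r$, so that the exponent produced is exactly $p_*$), then the Strichartz bound $\|e^{it\Delta}\phi^j\|_{L^q_tL^r_x}\le S_{q,r}\|\phi^j\|_{L^2_x}$, then the elementary inequality $\sum_j a_j^{p_*/2}\le (\sum_j a_j)^{p_*/2}$ (valid since $p_*/2\ge 1$) with $a_j=\|\phi^j\|_{L^2_x}^2$, then $\sum_j\|\phi^j\|_{L^2_x}^2\le 1$ from \eqref{eq:pf-L2-ortho}, and finally letting $N\to\infty$, I obtain
\[
S_{q,r}^{p_*}\le\sum_{j\ge1}\|e^{it\Delta}\phi^j\|_{L^q_tL^r_x}^{p_*}\le S_{q,r}^{p_*}\sum_{j\ge1}\|\phi^j\|_{L^2_x}^{p_*}\le S_{q,r}^{p_*}\Bigl(\sum_{j\ge1}\|\phi^j\|_{L^2_x}^2\Bigr)^{p_*/2}\le S_{q,r}^{p_*}.
\]
Hence every inequality is an equality. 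Since $p_*/2>1$, equality in $\sum_j(\|\phi^j\|_{L^2_x}^2)^{p_*/2}=(\sum_j\|\phi^j\|_{L^2_x}^2)^{p_*/2}$ together with $\sum_j\|\phi^j\|_{L^2_x}^2=1$ forces exactly one nonzero profile $\phi:=\phi^{j_0}$, with $\|\phi\|_{L^2_x}=1$; and equality term by term in $\sum_j\|e^{it\Delta}\phi^j\|_{L^q_tL^r_x}^{p_*}\le S_{q,r}^{p_*}\sum_j\|\phi^j\|_{L^2_x}^{p_*}$ gives $\|e^{it\Delta}\phi\|_{L^q_tL^r_x}=S_{q,r}\|\phi\|_{L^2_x}$, so $\phi$ is the desired maximizer.

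I expect the only genuine work, beyond the bookkeeping of which of $q,r$ supplies the relevant exponent in each regime, to be the upgrade of the error estimate \eqref{eq:strich-err} to the mixed norm $L^q_tL^r_x$, where the interpolation between Strichartz pairs and the $L^2_x$-boundedness of $e_n^N$ are used; the remainder is a line-by-line copy of the proof of Theorem~\ref{thm:Stri-max} with \eqref{eq:Stri-ortho} replaced by Lemma~\ref{le:ortho-other-Stri}.
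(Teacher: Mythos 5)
Your proposal is correct and is exactly the argument the paper intends: the paper omits the proof of Corollary~\ref{coro:Stri-max}, stating it is the proof of Theorem~\ref{thm:Stri-max} with \eqref{eq:Stri-ortho} replaced by Lemma~\ref{le:ortho-other-Stri}, which is what you carry out. You also correctly identify and fill the one detail the paper leaves implicit, namely upgrading the vanishing of the error term from $L^{2+4/d}_{t,x}$ to $L^q_tL^r_x$ by interpolating against a bounded admissible Strichartz norm, and your endgame via strict superadditivity of $a\mapsto a^{p_*/2}$ is an equivalent (if slightly cleaner) version of the paper's selection of the profile with the largest Strichartz norm.
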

 The proof of this corollary is similar to that used in Theorem \ref{thm:Stri-max} and thus will be omitted. Instead, we will focus on proving Lemma \ref{le:ortho-other-Stri}.
\begin{remark}
When $(q,r)=(\infty, 2)$, from the conservation of mass \eqref{eq:mass-consv}, we see that every $L^2_x$-initial data is a maximizer for the Strichartz inequality.
\end{remark}

\subsection{} In this subsection we concern on the existence of maximizers for the Sobolev-Strichartz inequality \eqref{eq:sobolev-strichartz} for the Schr\"odinger equation.
\begin{theorem}\label{thm:sobolev-strichartz-max}
Let $q,r$ be defined as in \eqref{eq:sobolev-strichartz}. Then
there exists a maximizing function $\phi\in \dot{H}_x^{s(q,r)}$
for \eqref{eq:sobolev-strichartz} with $C_{d,q,r}$ being the sharp
constant
$S^{q,r}:=\sup\{\|e^{it\Delta}u_0\|_{L_t^qL^r_x}:\|u_0\|_{\dot{H}^{s(q,r)}_x}=1\}.$
\end{theorem}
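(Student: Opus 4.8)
The plan is to reduce Theorem \ref{thm:sobolev-strichartz-max} to a linear profile decomposition for bounded sequences in $\dot{H}^{s}$, $s=s(q,r)$, and then repeat the concentration-compactness extraction used for Theorem \ref{thm:Stri-max}; the step I expect to be the main obstacle is establishing the $\dot{H}^{s}$-analogue of Theorem \ref{thm:lin-profile}, which is the new argument promised in the abstract. The first point to get right is the symmetry group: since $\|e^{ix\cdot\xi_{0}}\phi\|_{\dot{H}^{s}}$ genuinely depends on $\xi_{0}$ when $s\neq 0$, frequency modulations are \emph{not} symmetries of $\dot{H}^{s}$, so the correct group consists only of phases $e^{i\theta}$, space translations by $x_{0}$, Schr\"odinger time translations $e^{-it_{0}\Delta}$, and the $\dot{H}^{s}$-normalized dilations $\phi\mapsto h_{0}^{-(d/2-s)}\phi((\cdot-x_{0})/h_{0})$; the frequency direction that is lost is recaptured through $h_{0}\to 0$. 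One then shows that, for a bounded sequence $\{u_{n}\}\subset\dot{H}^{s}$, after passing to a subsequence, $u_{n}=\sum_{j=1}^{N}g_{n}^{j}(\phi^{j})+e_{n}^{N}$ with $\phi^{j}\in\dot{H}^{s}$, pairwise orthogonal parameters $(h_{n}^{j},x_{n}^{j},t_{n}^{j})$, the error estimate $\lim_{N}\lim_{n}\|e^{it\Delta}e_{n}^{N}\|_{L_{t}^{q}L_{x}^{r}}=0$, the $\dot{H}^{s}$-orthogonality $\|u_{n}\|_{\dot{H}^{s}}^{2}=\sum_{j\le N}\|\phi^{j}\|_{\dot{H}^{s}}^{2}+\|e_{n}^{N}\|_{\dot{H}^{s}}^{2}+o(1)$, and the bilinear Strichartz orthogonality $\lim_{n}\|e^{it\Delta}g_{n}^{j}(\phi^{j})\cdot e^{it\Delta}g_{n}^{k}(\phi^{k})\|_{L^{p_{0}}_{t,x}}=0$ for $j\neq k$ and a suitable exponent $p_{0}$. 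A natural route is a bubble-extraction iteration powered by a refined Sobolev--Strichartz inequality of the shape $\|e^{it\Delta}f\|_{L_{t}^{q}L_{x}^{r}}\lesssim\|f\|_{\dot{H}^{s}}^{1-\theta}\|f\|_{Y}^{\theta}$, with $\|\cdot\|_{Y}$ a weaker scaling-critical (Besov-type or adapted square-function) norm whose smallness forces smallness of the Strichartz norm; an alternative is to run a Littlewood--Paley decomposition, rescale each dyadic block to unit frequency so that no frequency parameter survives, and feed the resulting pieces into Theorem \ref{thm:lin-profile}. Proving this decomposition — in particular that the defect of compactness for $e^{it\Delta}\colon\dot{H}^{s}\to L_{t}^{q}L_{x}^{r}$ is exactly captured by $(\theta,h_{n},x_{n},t_{n})$-orbits, with no frequency parameter — is where essentially all of the work lies.

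Granting the decomposition, the extraction is parallel to Theorem \ref{thm:Stri-max}. First, every element of the $\dot{H}^{s}$-symmetry group is a Strichartz isometry, $\|e^{it\Delta}g_{n}^{j}(\phi^{j})\|_{L_{t}^{q}L_{x}^{r}}=\|e^{it\Delta}\phi^{j}\|_{L_{t}^{q}L_{x}^{r}}$: phases and space/time translations are harmless, while under the dilation the Strichartz norm acquires the factor $h_{0}^{\,s-d/2+d/r+2/q}$, whose exponent vanishes precisely because $s=s(q,r)$. Second, the bilinear orthogonality above yields, via the interpolation argument used to prove Lemma \ref{le:ortho-other-Stri}, the decoupling
\[
\lim_{n\to\infty}\Big\|\sum_{j=1}^{N}e^{it\Delta}g_{n}^{j}(\phi^{j})\Big\|_{L_{t}^{q}L_{x}^{r}}^{p}\ \le\ \sum_{j=1}^{N}\|e^{it\Delta}\phi^{j}\|_{L_{t}^{q}L_{x}^{r}}^{p},\qquad p:=\min(q,r).
\]

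Now take a maximizing sequence $\|u_{n}\|_{\dot{H}^{s}}=1$, $\|e^{it\Delta}u_{n}\|_{L_{t}^{q}L_{x}^{r}}\to S^{q,r}$, and apply the decomposition. By the error estimate and the triangle inequality, $S^{q,r}\le\lim_{N}\lim_{n}\|\sum_{j\le N}e^{it\Delta}g_{n}^{j}(\phi^{j})\|_{L_{t}^{q}L_{x}^{r}}$; combining this with the displayed decoupling, the definition of $S^{q,r}$, and $\sum_{j}\|\phi^{j}\|_{\dot{H}^{s}}^{2}\le 1$ gives
\[
(S^{q,r})^{p}\ \le\ \sum_{j\ge 1}\|e^{it\Delta}\phi^{j}\|_{L_{t}^{q}L_{x}^{r}}^{p}\ \le\ (S^{q,r})^{p}\sum_{j\ge 1}\|\phi^{j}\|_{\dot{H}^{s}}^{p}\ \le\ (S^{q,r})^{p}\Big(\sum_{j\ge 1}\|\phi^{j}\|_{\dot{H}^{s}}^{2}\Big)^{p/2}\ \le\ (S^{q,r})^{p},
\]
the last two inequalities using $p\ge 2$. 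Since $0<S^{q,r}<\infty$ (immediate from the validity and nontriviality of \eqref{eq:sobolev-strichartz}), equality holds throughout; the middle equality forces $\|e^{it\Delta}\phi^{j}\|_{L_{t}^{q}L_{x}^{r}}=S^{q,r}\|\phi^{j}\|_{\dot{H}^{s}}$ for every $j$, and $\sum_{j}\|e^{it\Delta}\phi^{j}\|^{p}=(S^{q,r})^{p}>0$ forces some profile $\phi:=\phi^{j_{0}}$ to be nonzero. That $\phi$ is the desired maximizer for \eqref{eq:sobolev-strichartz}; when $p>2$, equality in $\sum a_{j}^{p}\le(\sum a_{j}^{2})^{p/2}$ moreover shows there is a single profile, with $\|\phi\|_{\dot{H}^{s}}=1$.
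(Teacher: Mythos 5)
Your overall architecture matches the paper's: reduce Theorem \ref{thm:sobolev-strichartz-max} to a linear profile decomposition for bounded sequences in $\dot{H}^{s(q,r)}_x$ (with no frequency parameter in the symmetry group, exactly as you observe), then run the same extraction as in Theorem \ref{thm:Stri-max}. Your extraction argument is sound and in fact more explicit than the paper's, which omits it; the chain $(S^{q,r})^{p}\le\sum_j\|e^{it\Delta}\phi^j\|^{p}\le (S^{q,r})^{p}(\sum_j\|\phi^j\|_{\dot H^s}^{2})^{p/2}\le (S^{q,r})^{p}$ with $p=\min(q,r)\ge 2$ is the right way to close it, modulo the standing caveat that the decoupling inequality with exponent $\min(q,r)$ must also be supplied for Sobolev--Strichartz (non-admissible) pairs, which requires interpolating the bilinear orthogonality \eqref{eq:pf-Stri-ortho} against Sobolev--Strichartz bounds rather than quoting Lemma \ref{le:ortho-other-Stri} verbatim.

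The genuine gap is that the $\dot{H}^{s}$ profile decomposition itself --- which you correctly identify as ``where essentially all of the work lies'' --- is never proved: you only gesture at two possible routes (a bubble-extraction iteration driven by a refined Sobolev--Strichartz inequality, or a Littlewood--Paley rescaling scheme). The first route is viable and is essentially the Keraani/Killip--Visan strategy; the paper's appendix supplies the generalized improved Sobolev embeddings \eqref{eq:g-keraani-impv-sobolev} and \eqref{eq:g-killip-visan-impv-sobobev} needed to run it, but explicitly declines to carry it out. The paper's actual proof of Theorem \ref{thm:pf-homog-sobolev} is different and shorter: apply the known $L^2_x$ decomposition (Theorem \ref{thm:lin-profile}) to the sequence $D^{s}u_n$, then eliminate the frequency parameters $\xi_n^j$ by a dichotomy on $h_n^j\xi_n^j$ --- when $h_n^j\xi_n^j$ converges, the modulation is absorbed into a redefined profile (reducing to $\xi_n^j\equiv 0$); when $|h_n^j\xi_n^j|\to\infty$, the rescaled piece lives at frequency $\sim|h_n^j\xi_n^j|$, so applying $D^{-s}$ gains a factor $|h_n^j\xi_n^j|^{-s}\to 0$ by the H\"ormander--Mikhlin theorem and the piece is dumped into the error term. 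This dichotomy is the one concrete idea your proposal is missing; without it (or a full execution of one of your sketched alternatives), the proof of the theorem is not complete.
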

As we can see, it suffices to establish a profile decomposition result for initial data in $\dot{H}^{s(q,r)}_x$.
\begin{theorem}\label{thm:pf-homog-sobolev}
Let $s(q,r)$ be defined as in \eqref{eq:sobolev-strichartz} and
$\{u_n\}_{n\ge 1}$ be a bounded sequence in $\dot{H}^{s(q,r)}_x$.
Then up to passing to a subsequence of $(u_n)_{n\ge 1}$, there
exists a sequence of functions $\phi^j\in \dot{H}^s_x$ and  a
sequence of parameters $(h_n^j,x_n^j,t_n^j)$ such that for any
$N\ge 1$, there exists $e_n^N\in \dot{H}^s_x$,
\begin{equation}\label{eq:ss-profile-decom}
u_n=\sum_{j=1}^{N}e^{-it_n^j\Delta}\left(\frac {1}{(h_n^j)^{d/2-s}}\phi^j(\frac {\cdot-x_n^j}{h_n^j})\right)+e_n^N,
\end{equation}
with the parameters $(h_n^j,x_n^j,t_n^j)$ satisfying the following constraint: for $j\neq k$,
\begin{equation}\label{eq:ss-param-constrnt}
\lim_{n\to\infty} \left(\frac {h_n^j}{h_n^k}+\frac {h_n^k}{h_n^j}
+\frac {|t_n^j-t_n^k|}{(h_n^j)^2}+\frac {|x_n^j-x_n^k|}{h_n^j}\right)=\infty,
\end{equation}
and the error term having the asymptotically vanishing
Sobolev-Strichartz norm
\begin{equation}\label{eq:ss-profile-err}
\lim_{N\to \infty}\lim_{n\to \infty}\|e^{it\Delta}e_n^N\|_{L_t^qL^r_x}=0,
\end{equation} and the following orthogonality property: for any $N\ge 1$,
\begin{equation}\label{eq:ss-pf-L2-ortho}
\lim_{n\to\infty}
\left(\|u_n\|^2_{\dot{H}^s_x}-(\sum_{j=1}^N\|\phi^j\|^2_{\dot{H}^s_x}+\|e_n^N\|^2_{\dot{H}^s_x})\right)=0.
\end{equation}
\end{theorem}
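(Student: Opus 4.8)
The plan is to derive Theorem \ref{thm:pf-homog-sobolev} from the mass-critical profile decomposition Theorem \ref{thm:lin-profile}; this reduction is the ``new'' ingredient. Write $s=s(q,r)\in(0,d/2)$ and set $v_n:=|\nabla|^s u_n$, so $\{v_n\}$ is bounded in $L^2_x$ with $\|v_n\|_{L^2_x}=\|u_n\|_{\dot{H}^s_x}$. Since $e^{it\Delta}$ commutes with $|\nabla|^{-s}$ and $\||\nabla|^{-s}g\|_{L^r_x}\lesssim\|g\|_{L^\rho_x}$ for $\frac1\rho=\frac1r+\frac sd$ (Sobolev embedding), and since $s>0$ forces $q>4/d$ so that $(q,\rho)$ is Schr\"odinger admissible, one has
\begin{equation}\label{eq:pl-red}
\|e^{it\Delta}g\|_{L^q_tL^r_x}\lesssim\|e^{it\Delta}|\nabla|^s g\|_{L^q_tL^\rho_x},\qquad g\in\dot{H}^s_x .
\end{equation}
I would then apply Theorem \ref{thm:lin-profile} to $\{v_n\}$, obtaining, after a subsequence, profiles $\psi^j\in L^2_x$, orthogonal parameters $(h_n^j,\xi_n^j,x_n^j,t_n^j)$ and $v_n=\sum_{j\le N}g_n^j(\psi^j)+\tilde e_n^N$ satisfying \eqref{eq:pf-L2-ortho}, \eqref{eq:pf-L2-wk-ortho-1}, \eqref{eq:pf-L2-wk-ortho-2}; interpolating \eqref{eq:strich-err} against mass conservation and the Keel--Tao endpoint estimate upgrades it to $\lim_{N\to\infty}\lim_{n\to\infty}\|e^{it\Delta}\tilde e_n^N\|_{L^q_tL^\rho_x}=0$ (only the borderline case $q=2$, $d\ge3$, where $(q,\rho)$ is itself the endpoint, needs a separate word here).

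Next I would pass to a further subsequence so that for each $j$ either (A) $\limsup_n h_n^j|\xi_n^j|<\infty$, whence $h_n^j\xi_n^j\to\xi_\infty^j\in\R^d$, or (B) $h_n^j|\xi_n^j|\to\infty$. A direct Fourier computation gives, with $y_n^j:=x_n^j+2t_n^j\xi_n^j$ and $\omega_n^j:=e^{it_n^j|\xi_n^j|^2}e^{ix_n^j\cdot\xi_n^j}$,
\[
\widehat{|\nabla|^{-s}g_n^j(\psi^j)}(\zeta)=\omega_n^j\,|\zeta|^{-s}e^{it_n^j|\zeta|^2}(h_n^j)^{d/2}e^{-iy_n^j\cdot\zeta}\,\widehat{\psi^j}(h_n^j\zeta-h_n^j\xi_n^j).
\]
In case (A), since $h_n^j\xi_n^j\to\xi_\infty^j$ and $\omega_n^j$ sub-converges to some unimodular $\omega^j$, this identifies $|\nabla|^{-s}g_n^j(\psi^j)$ with $e^{-it_n^j\Delta}\big[(h_n^j)^{-(d/2-s)}\phi^j\big(\tfrac{\cdot-y_n^j}{h_n^j}\big)\big]$ modulo an $o_{\dot{H}^s_x}(1)$ error, where $\phi^j:=\omega^j|\nabla|^{-s}(e^{i\cdot\xi_\infty^j}\psi^j)\in\dot{H}^s_x$ and $\|\phi^j\|_{\dot{H}^s_x}=\|\psi^j\|_{L^2_x}$; thus case (A) profiles become honest $\dot{H}^s_x$-profiles of the shape required by \eqref{eq:ss-profile-decom}. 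In case (B), because $(q,r)$ is $\dot{H}^s_x$-critical the norm $\|e^{it\Delta}\cdot\|_{L^q_tL^r_x}$ is unchanged by the modulations, translations and ($\dot{H}^s$-)dilations built into $g_n^j$, so $\|e^{it\Delta}|\nabla|^{-s}g_n^j(\psi^j)\|_{L^q_tL^r_x}=\|e^{it\Delta}D^{-s}_{h_n^j\xi_n^j}\psi^j\|_{L^q_tL^r_x}$ where $D^{-s}_\eta$ has symbol $|\cdot+\eta|^{-s}$; truncating $\widehat{\psi^j}$ to $\{|\zeta|\le R\}$ this is $\lesssim\|D^{-s}_{h_n^j\xi_n^j}\psi^j\|_{\dot{H}^s_x}$ with $\|D^{-s}_{h_n^j\xi_n^j}\psi^j\|_{\dot{H}^s_x}^2=\int_{|\zeta|\le R}|\zeta|^{2s}|\zeta+h_n^j\xi_n^j|^{-2s}|\widehat{\psi^j}(\zeta)|^2\,d\zeta\to0$ (as $|\zeta+h_n^j\xi_n^j|\ge h_n^j|\xi_n^j|-R\to\infty$), and the general $\psi^j$ follows by density using the crude bound $\|e^{it\Delta}|\nabla|^{-s}g_n^j(w)\|_{L^q_tL^r_x}\le C_{d,q,r}\|w\|_{L^2_x}$ from \eqref{eq:sobolev-strichartz}. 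Hence every case (B) profile has asymptotically vanishing Sobolev--Strichartz norm and is harmless.

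To assemble, relabel the case (A) profiles as $\phi^1,\phi^2,\dots$ with parameters $(h_n^j,y_n^j,t_n^j)$, choose $N'=N'(N)\to\infty$ so that the first $N$ type-(A) profiles occur among the first $N'$ mass-critical profiles, and set $e_n^N:=u_n-\sum_{j\le N}e^{-it_n^j\Delta}\big[(h_n^j)^{-(d/2-s)}\phi^j\big(\tfrac{\cdot-y_n^j}{h_n^j}\big)\big]$. By the previous paragraph $e_n^N=|\nabla|^{-s}\tilde e_n^{N'}+(\text{type-(B) remainders})+o_{\dot{H}^s_x}(1)$, so \eqref{eq:ss-profile-err} follows from \eqref{eq:pl-red} and the upgraded error estimate, while \eqref{eq:ss-pf-L2-ortho} follows from \eqref{eq:pf-L2-ortho}, \eqref{eq:pf-L2-wk-ortho-1}, \eqref{eq:pf-L2-wk-ortho-2} together with $\langle|\nabla|^{-s}f,|\nabla|^{-s}g\rangle_{\dot{H}^s_x}=\langle f,g\rangle_{L^2_x}$ and $\|\phi^j\|_{\dot{H}^s_x}=\|\psi^j\|_{L^2_x}$. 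The constraint \eqref{eq:ss-param-constrnt} is read off Definition \ref{def:orthogonal}: if $\tfrac{h_n^j}{h_n^k}+\tfrac{h_n^k}{h_n^j}\to\infty$ it is immediate; otherwise $h_n^j\sim h_n^k$ and the boundedness of $h_n^j|\xi_n^j|,h_n^k|\xi_n^k|$ makes $h_n^j|\xi_n^j-\xi_n^k|$ bounded, so the second alternative of Definition \ref{def:orthogonal} must hold, and using $y_n^j-y_n^k=(x_n^j-x_n^k)+2(t_n^j\xi_n^j-t_n^k\xi_n^k)$ it converts to $\tfrac{|t_n^j-t_n^k|}{(h_n^j)^2}+\tfrac{|y_n^j-y_n^k|}{h_n^j}\to\infty$.

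The main obstacle is the case (B) estimate: a frequency-concentrated bump that survives in $|\nabla|^s u_n$ must be annihilated, up to $o(1)$ in $L^q_tL^r_x$, upon applying $|\nabla|^{-s}$ — this is precisely why the $\dot{H}^s_x$ decomposition \eqref{eq:ss-profile-decom} needs no frequency parameter. The remaining points (the Galilean bookkeeping $x_n^j\mapsto y_n^j$, the transfer of orthogonality, and the endpoint caveat $q=2$ in \eqref{eq:pl-red}) are routine.
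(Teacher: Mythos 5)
Your argument is essentially the paper's own proof: apply $D^s$, invoke the $L^2_x$ profile decomposition of Theorem \ref{thm:lin-profile}, absorb the frequency parameter into the profile (via the Galilean identity and the shift $x_n^j\mapsto x_n^j+2t_n^j\xi_n^j$) when $h_n^j\xi_n^j$ stays bounded, and discard the profile into the error when $h_n^j|\xi_n^j|\to\infty$ because its Sobolev--Strichartz norm vanishes --- a step you carry out by an explicit Plancherel computation plus density where the paper invokes the H\"ormander--Mikhlin theorem on Schwartz profiles. The one point you flag but do not settle, namely upgrading \eqref{eq:strich-err} to smallness in $L^q_tL^\rho_x$ when $q=2$ (where $(q,\rho)$ is the Keel--Tao endpoint and interpolation against the $L^{2+4/d}_{t,x}$ smallness degenerates), is glossed over in the paper's proof as well.
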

When $s=1$ and $d\ge 3$, Keraani
\cite{Keraani:2001:profile-schrod-H^1} established Theorem
\ref{thm:pf-homog-sobolev} for the Schr\"odinger equation based on
the following Besov-type improvement of the Sobolev embedding
\begin{equation}\label{eq:keraani-impv-sobolev}
\|f\|_{L^{2d/(d-2)}_x}\lesssim
\|Df\|_{L^2_x}^{1-2/d}\|Df\|^{2/d}_{\dot{B}^{0}_{2,\infty}},
\end{equation} where $\|\cdot\|_{\dot{B}^{0}_{2,\infty}}$ is the Besov norm defined
via
$$\|f\|_{\dot{B}^0_{2,\infty}}:=\sup_{k\in \Z}\|f_k\|_{L^2_x}$$ with $f_k$ denoting the $k$-th
Littlewood-Paley piece defined via the Fourier transform
$\hat{f}_k:=\hat{f}1_{2^k\le |\xi|\le 2^{k+1}}$ \footnote{For a
rigorous definition of the Littlewood-Paley decomposition (or
dyadic decomposition) in terms of smooth cut-off functions, see
\cite[p.241]{Stein:1993}.}, and $D^s$ the fractional differentiation
operator defined via the inverse Fourier transform,
$$D^s f(x):=\int_{\R^d}e^{ix\xi}|\xi|^s \hat{f}(\xi)d\xi.$$ He followed the arguments
in \cite{Bahouri-Gerard:1999:profile-wave}
where Bahouri and G\'erard established the profile-decomposition result in
the context of the wave equation with initial data in
$\dot{H}^1_x(\R^3)$. Recently under the same constraints on $s$ and
$d$, Killip and Visan \cite{Killip-Visan:2008:clay-lecture-notes}
obtained the same result by relying on their interesting improved
Sobolev embedding involving the critical $L^{2d/(d-2)}_x$-norm on the
right-hand side:
\begin{equation}\label{eq:killip-visan-impv-sobobev}
\|f\|_{L^{2d/(d-2)}_x}\lesssim \|Df\|_{L^2_x}^{1-2/d}\sup_{k\in \Z}\|f_k\|_{L^{2d/(d-2)}_x}^{2/d}.
\end{equation} Note that \eqref{eq:killip-visan-impv-sobobev} implies \eqref{eq:keraani-impv-sobolev} by the usual Sobolev embedding. By following their approaches, we will generalize both Keraani's and Killip-Visan's improved $\dot{H}^1_x$-Sobolev embeddings to those with $\dot{H}^s_x$ norms where $\frac 1r+\frac sd=\frac 12$ and $d\ge 1$ in the appendix of this paper. Consequently almost same approaches as in \cite{Keraani:2001:profile-schrod-H^1} or \cite{Killip-Visan:2008:clay-lecture-notes} would yield Theorem \ref{thm:pf-homog-sobolev} without difficulties but we choose not to do it in this paper for simplicity. However, we will offer a new proof of Theorem \ref{thm:pf-homog-sobolev} by taking advantage of the existing $L^2_x$ linear profile decomposition, Theorem \ref{thm:lin-profile}. The idea can be roughly explained as follows.

For $(u_n)_{n\ge 1}\in \dot{H}^s_x$, we regard $(D^su_n)_{n\ge 1}$
as an $L^2_x$ sequence and then apply Theorem
\ref{thm:lin-profile} to this new sequence. Then the main task is
to show how to eliminate the frequency parameter $\xi_n^j$ from
the decomposition. To do it, we have two cases according to the
limits of the sequence $(h_n^j\xi_n^j)_{n\ge 1}$ for each $j$:
when the limit of $h_n^j\xi_n^j$ is finite, we will
change the profiles $\phi^j$ so that we can reduce to $\xi_n^j=0$;
on the other hand, when it is infinite, we will group this term
into the error term since one can show that its Sobolev Strichartz
norm is asymptotically small.

We organize this paper as follows: in Section \ref{sec:notations}
we establish some notations; in Section
\ref{sec:Stri-max} we prove Theorems \ref{thm:Stri-max},
\ref{le:ortho-other-Stri}; in Section
\ref{sec:sobolev-strichartz-max} we prove Theorem
\ref{thm:sobolev-strichartz-max}; finally in Appendix, we include
the arguments for the general Keraani's and Killip-Visan's
improved Sobolev embeddings.

\textbf{Acknowledgments.} The author is grateful to his advisor
Terence Tao for many helpful discussions. The author also thanks Professor Changxing Miao
for his comments.

\section{Notation}\label{sec:notations}
We use $X\lesssim Y$, $Y\gtrsim X$, or $X=O(Y)$ to denote the estimate $|X|\le
C Y$ for some constant $0<C<\infty$, which might depend on $d$,$p$ and $q$ but not on the functions. If $X\lesssim Y$ and $Y\lesssim X$ we will write $X\sim Y$. If the constant $C$ depends on a special parameter, we shall denote it explicitly by subscripts.

Throughout the paper, the limit sign $\lim_{n\to\infty}$ should be
understood as $\limsup_{n\to\infty}$.

The homogeneous Sobolev space $\dot{H}^s_x(\R^d)$ for $s\ge 0$ can
be defined in terms of the fractional differentiation:
\begin{equation*}
\dot{H}^s_x(\R^d):=\{f:\|f\|_{\dot{H}^s_x(\R^d)}:=\|D^{s}f\|_{L^2_x(\R^d)}
=\||\xi|^{s}\hat{f}\|_{L^2_\xi(\R^d)}<\infty\}.
\end{equation*}
We define the space-time norm $L^q_tL^r_x$ of $f$ on $\R\times
\R^d$ by
\begin{equation*}\|f\|_{L^q_tL^r_x(\R\times\R^d)}:=\left(\int_{\R}\left(\int_{\R^d}
|f(t,x)|^{r}d\,x\right)^{q/r}d\,t\right)^{1/q}, \end{equation*}
with the usual modifications when $q$ or $r$ are equal to
infinity, or when the domain $\R\times\R^d$ is replaced by a small
region. When $q=r$, we abbreviate it by $L^{q}_{t, x}$. Unless
specified, all the space-time integration are taken over $\R\times
\R^d$, and the spatial integration over $\R^d$.

The inner product $\langle\cdot,\cdot\rangle_{L^2_x}$ in the Hilbert space $L^2_x(\R^d)$ is defined via $$\langle f,g\rangle_{L^2_x}:=\int_{\R^d}f(x)\bar{g}(x)dx,$$
where $\bar{g}$ denotes the usual complex conjugate of $g$ in the complex plane $\C$.

\section{Maximizers for the symmetric Strichartz inequalities}\label{sec:Stri-max}
\begin{proof}[Proof of Theorem \ref{thm:Stri-max}]
We choose a maximizing sequence $(u_n)_{n\ge 1}$ with $\|u_n\|_{l^2_x}=1$, and then, up to a subsequence, decompose it into linear profiles as in Theorem \ref{thm:lin-profile}. Then from the asymptotically vanishing Strichartz norm \eqref{eq:strich-err}, we obtain that, for any $\eps >0$, there exists $n_0$ so that for all $N\ge n_0$ and $n\ge n_0$,
\begin{equation*}
S-\eps\le \|\sum_{j=1}^{N} e^{it\Delta}g_n^j(\phi^j)\|_{L^{2+4/d}_{t,x}}.
\end{equation*}
Thus from \eqref{eq:Stri-ortho}, there exists $n_1\ge n_0$ such that when $n, N\ge n_1$,
\begin{equation*}
S^{2+4/d}-2\eps\le\sum_{j=1}^{N}\|e^{it\Delta}\phi^j\|^{2+4/d}_{L^{2+4/d}_{t,x}}.
\end{equation*}
Choosing $j_0\in [1,N]$ such that $e^{it\Delta} \phi^{j_0} $
has the largest $L^{2+4/d}_{t,x}$ norm among $1\le j\le N$, we see
that, by the usual Strichartz inequality,
\begin{align}\label{eq:local-1}
S^{2+4/d}-2\eps \le \|e^{it\Delta} \phi^{j_0}\|^{4/d}_{L^{2+4/d}_{t,x}}\sum_{j=1}^{N}
\|e^{it\Delta} \phi^j\|^2_{L^{2+4/d}_{t,x}}\le S^{2+4/d}\|\phi^{j_0}\|^{4/d}_{L^2_x}\le S^{2+4/d}
\end{align} since \eqref{eq:pf-L2-ortho} gives
\begin{equation}\label{eq:local-2}\sum_{j=1}^\infty \|\phi^j\|^2_{L^2_x}\le \lim_{n\to\infty}\|u_n\|^2_{L^2_x}=1.\end{equation}
This latter fact also gives that
$\lim_{j\to\infty}\|\phi^j\|_{L^2_x}=0$, which together with
\eqref{eq:local-1} shows that $j_0$ must terminate before some fixed
constant which does not depend on $\eps$. Hence in
\eqref{eq:local-1} we can take $\eps$ to zero to obtain
\begin{equation*}
\|\phi^{j_0}\|_{L^2_x}=1.
\end{equation*}
This further shows that $\phi^j=0$ for all but $j=j_0$ from
\eqref{eq:local-2}. Therefore $\phi^{j_0}$ is a maximizer. Thus
the proof of Theorem \eqref{thm:Stri-max} is complete.
\end{proof}
We will closely follow the approach in \cite[Lemma
5.5]{Begout-Vargas:2007:profile-schrod-higher-d} to prove Lemma
\ref{le:ortho-other-Stri}.
\begin{proof}[Proof of Lemma \ref{le:ortho-other-Stri}] We only handle \eqref{eq:Stri-ortho-1} since the proof of \eqref{eq:Stri-ortho-2} is similar. By interpolating with \eqref{eq:pf-Stri-ortho}, we see that
for $j\neq k$ and non-endpoint Schr\"odinger admissible pairs $q, r$,
\begin{equation}\label{eq:loc-3}
\lim_{n\to\infty}\|e^{it\Delta}g_n^j(\phi^j)e^{it\Delta}g_n^k(\phi^k)\|_{L^{q/2}_tL^{r/2}_x}=0.
\end{equation}
Now we expand the left hand side of \eqref{eq:Stri-ortho-1} out,
which is equal to
\begin{align*}
&\left(\int\left(\int\left|\sum_{j=1}^N
e^{it\Delta}g_n^j(\phi^j)\right|^rdx\right)^{q/r}dt\right)^{r/q}\\
&\le\left(\int\left(\int\sum_{j=1}^N\left|e^{it\Delta}g_n^j(\phi^j)\right|^2|\sum_{l=1}^{N}e^{it\Delta}
g_n^l(\phi^l)|^{r-2}\right.\right.\\
&\quad+\left.\left.\sum_{k\neq j} |e^{it\Delta}g_n^j(\phi^j)||e^{it\Delta}g_n^k(\phi^k)|
|\sum_{l=1}^{N}e^{it\Delta}g_n^l(\phi^l)|^{r-2}dx\right)^{q/r}dt\right)^{r/q}\\
&\le\sum_{j=1}^N\left(\int\left(\int|e^{it\Delta}g_n^j(\phi^j)|^2
|\sum_{l=1}^{N}e^{it\Delta}g_n^l(\phi^l)|^{r-2}dx\right)^{q/r}dt\right)^{r/q}\\
&\quad +\sum_{k\neq j}\left(\int\left(\int|e^{it\Delta}g_n^j(\phi^j)|
|e^{it\Delta}g_n^k(\phi^k)||\sum_{l=1}^{N}e^{it\Delta}g_n^l
(\phi^l)|^{r-2}dx\right)^{q/r}dt\right)^{r/q}\\
&=:A+B.
\end{align*}
For $B$, since $\frac rq=\frac 2q+\frac {r-2}{q},\quad 1=\frac 2r+\frac {r-2}{r},$
the H\"older inequality yields
\begin{equation*} B\le \sum_{k\neq j}\|e^{it\Delta}g_n^j(\phi^j)e^{it\Delta}g_n^k
(\phi^k)\|_{L_t^{q/2}L_x^{r/2}}\|\sum_{l=1}^N e^{it\Delta}g_n^l(\phi^l)\|^{r-2}_{L_t^{q}L_x^{r}},
\end{equation*}
which goes to zero by \eqref{eq:loc-3} as $n$ goes to infinity.
Hence we are left with estimating $A$.

For $A$, since $q,r$ are in the non-endpoint region and $q\ge r$,
we have $2< r\le 2+4/d$, i.e., $0<r-2\le 4/d$. We let $s:=[r-2]$,
the largest integer which is less than $r-2$. Then because $0\le
r-2-s<1$,
\begin{align}\label{eq:loc-4}
&|e^{it\Delta}g_n^j(\phi^j)|^2|\sum_{l=1}^{N}e^{it\Delta}g_n^l(\phi^l)|^{r-2}\nonumber\\
&\quad\le
\sum_{l=1}^N|e^{it\Delta}g_n^j(\phi^j)|^2|\sum_{k=1}^{N}e^{it\Delta}g_n^k(\phi^k)|^s
|e^{it\Delta}g_n^l(\phi^l)|^{r-2-s}.
\end{align}
We now eliminate some terms in \eqref{eq:loc-4}. The first case we
consider is $l\neq j$: since $r-2-s<2$, we write
\begin{align*}
&|e^{it\Delta}g_n^j(\phi^j)|^2|\sum_{k=1}^{N}e^{it\Delta}g_n^k(\phi^k)|^s
|e^{it\Delta}g_n^l(\phi^l)|^{r-2-s}\\
&\quad =
|e^{it\Delta}g_n^j(\phi^j)|^{4+s-r}|\sum_{k=1}^{N}e^{it\Delta}g_n^k(\phi^k)|^s
|e^{it\Delta}g_n^j(\phi^j)e^{it\Delta}g_n^l(\phi^l)|^{r-2-s}.
\end{align*}
Then the H\"older inequality and \eqref{eq:loc-3} show that the
summation above goes to zero as $n$ goes to infinity. So we may
assume that $l=j$ and take out the summation in $l$ in
\eqref{eq:loc-4}. The second case we consider is when the terms in
the expansion of $|\sum_{k=1}^{N}e^{it\Delta}g_n^k(\phi^k)|^s$
contain two distinct terms:
\begin{align*}
|\sum_{k=1}^{N}e^{it\Delta}g_n^k(\phi^k)|^s&\le
\sum_{k=1}^{N}|e^{it\Delta}g_n^k(\phi^k)|^s+\\
&+ \sum_{k_1\neq k_2,\atop
k_1+\cdots+k_s=s}|e^{it\Delta}g_n^{k_1}(\phi^{k_1})e^{it\Delta}g_n^{k_2}(\phi^{k_2})|
\cdots|e^{it\Delta}g_n^{k_s}(\phi^{k_s})|.
\end{align*}
Again the interpolation argument and \eqref{eq:loc-3} show that
the second term above goes to zero as $n$ goes to infinity.
Combining these two cases, we reduce \eqref{eq:loc-4} to
\begin{equation*}
|e^{it\Delta}g_n^j(\phi^j)|^r+\sum_{k\neq j} |e^{it\Delta}g_n^j(\phi^j)|^{r-s}|e^{it\Delta}g_n^k(\phi^k)|^s.
\end{equation*}
For the second term above, we consider $r-s\le s$ and $r-s\ge s$;
it is not hard to see that it goes to zero as expected when $n$
goes to infinity. Therefore the proof of Lemma
\ref{le:ortho-other-Stri} is complete.
\end{proof}

\section{Maximizers for the Sobolev-Strichartz inequalities}\label{sec:sobolev-strichartz-max}
\begin{proof}[Proof of Theorem \ref{thm:pf-homog-sobolev}]
It is easy to see that $\{u_n\}\in \dot{H}^s_x(\R^d) $ implies
that $\{D^{s}u_n\}\in L^2_x(\R^d)$. We then apply Theorem
\ref{thm:lin-profile}: there exists a sequence of
$(\psi^j)_{j\ge 1}$ and orthogonal
$\Gamma_n^j=(h_n^j,\xi_n^j,x_n^j,t_n^j)$ so that, for any $N\ge 1$
\begin{equation}\label{eq:loc-6}
D^su_n=\sum_{j=1}^N g_n^j(\psi^j)+w_n^N,
\end{equation} with $w_n^N\in L^2_x$ and all the properties in Theorem \ref{thm:lin-profile} being satisfied. Without loss of generality,
we assume all $\psi^j$ to be Schwartz functions. We then rewrite it as
\begin{equation}\label{eq:loc-7}
u_n=\sum_{j=1}^N D^{-s}g_n^j(\psi^j)+D^{-s}w_n^N.
\end{equation}
Writing $e_n^N:=D^{-s}w_n^N$, we see that for $q,r$ in
\eqref{eq:sobolev-strichartz}, the Sobolev embedding and
\eqref{eq:strich-err} together give \eqref{eq:ss-profile-err}.
Next we show how to eliminate $\xi_n^j$ in the profiles.

{\bf Case 1.} Up to a subsequence, $\lim_{n\to\infty}h_n^j\xi_n^j=\xi^j\in \R^d$ for
some $1\le j\le N$. From the Galilean transform,
$$e^{it_0\Delta}(e^{i(\cdot)\xi_0}\phi)(x)=e^{ix\xi_0-it_0|\xi_0|^2}e^{it_0\Delta}\phi(x-2t\xi_0),$$
we see that
\begin{align*}
&e^{-it_n^j\Delta}\left(\frac
{1}{(h_n^j)^{d/2}}e^{ix_n^j\xi_n^j+it_n^j|\xi_n^j|^2}[e^{i(\cdot)h_n^j\xi_n^j}\psi^j](\frac{\cdot-x_n^j}{h_n^j})\right)(x-2t_n^j\xi_n^j)\\
&=e^{it_n^j|\xi_n^j|^2}e^{-it_n^j\Delta}\left(\frac
{1}{(h_n^j)^{d/2}}e^{i(\cdot)\xi_n^j}\psi^j(\frac{\cdot-x_n^j}{h_n^j})\right)(x-2t_n^j\xi_n^j)\\
&=e^{ix\xi_n^j}e^{-it_n^j\Delta}\left(\frac{1}{(h_n^j)^{d/2}}\psi^j(\frac{\cdot-x_n^j}{h_n^j})\right)(x)=g_n^j(\psi^j)(x).
\end{align*} On the other hand, since the
symmetries defined in Definition \ref{def:symmetry} keep the
$L^2_x$-norm invariant,
\begin{align*}
&\|e^{-it_n^j\Delta}\left(\frac
{1}{(h_n^j)^{d/2}}e^{ix_n^j\xi_n^j+it_n^j|\xi_n^j|^2}[e^{i(\cdot)h_n^j\xi_n^j}\psi^j](\frac{\cdot-x_n^j}{h_n^j})\right)(x-2t_n^j\xi_n^j)-\\
&\quad- e^{-it_n^j\Delta}\left(\frac
{1}{(h_n^j)^{d/2}}e^{ix_n^j\xi_n^j+it_n^j|\xi_n^j|^2}[e^{i(\cdot)\xi^j}\psi^j](\frac{\cdot-x_n^j}{h_n^j})\right)(x-2t_n^j\xi_n^j)\|_{L^2_x}\\
&=\|(e^{ixh_n^j\xi_n^j}-e^{ix\xi^j})\psi^j\|_{L^2_x}\to 0
\end{align*} as $n$ goes to infinity. Hence we can replace
$g_n^j(\psi^j)$ with $$e^{-it_n^j\Delta}\left(\frac
{1}{(h_n^j)^{d/2}}e^{i(\cdot)\xi_n^j+it_n^j|\xi_n^j|^2}[e^{i(\cdot)\xi^j}\psi^j](\frac{\cdot-x_n^j}{h_n^j})\right)(x-2t_n^j\xi_n^j);$$
for the differences, we put them into the error term. Thus if
further regarding $e^{ix\xi^j}\psi^j$ as a new $\psi^j$, we can
re-define
$$g_n^j(\psi^j):=e^{-it_n^j\Delta}\left(\frac
{1}{(h_n^j)^{d/2}}e^{ix_n^j\xi_n^j+it_n^j|\xi_n^j|^2}\psi^j(\frac{\cdot-x_n^j}{h_n^j})\right)(x-2t_n^j\xi_n^j).$$
Hence in the decomposition \eqref{eq:loc-6}, we see that $\xi_n^j$
no longer plays the role of the frequency parameter and hence we
can assume that $\xi_n^j\equiv 0$ for this $j$ term. With this
assumption,
$$D^{-s}g_n^j(\psi^j)=e^{-it_n^j\Delta}\left(\frac
{1}{(h_n^j)^{d/2-s}}(D^{-s}\psi^j)(\frac{\cdot-x_n^j}{h_n^j})\right)(x).$$
Setting $\phi^j:=D^{-s}\psi^j$, we see that this case is done.

{\bf Case 2.} $\lim_{n\to\infty}|h_n^j\xi_n^j|=\infty$. It is
clear that
\begin{equation*}
e^{it\Delta}g_n^j(\psi^j)(x)=e^{ix\xi_n^j-it|\xi_n^j|^2}\frac
{1}{(h_n^j)^{d/2}}e^{i\frac {t-t_n^j}{(h_n^j)^2}\Delta}
\psi^j(\frac {x+x_n^j-2t_n^j\xi_n^j}{h_n^j}).
\end{equation*}
Hence if changing $t'=\frac {t-t_n^j}{(h_n^j)^2}$ and $x'=\frac
{x+x_n^j-2t_n^j\xi_n^j}{h_n^j}$, we obtain
\begin{align*}
\|D^{-s}g_n^j(\psi^j)\|_{L^q_tL^r_x}&=(h_n^j)^{\frac 2q+\frac
dr-s-\frac d2} \|D^{-s}(e^{ixh_n^j\xi_n^j}
e^{is\Delta}\psi^j)\|_{L^q_{t'}L^r_{x'}}\\
&=\|D^{-s}(e^{ixh_n^j\xi_n^j}e^{is\Delta}\psi^j)\|_{L^q_{t'}L^r_{x'}}.
\end{align*}
By the H\"ormander-Mikhlin multiplier theorem \cite[Theorm 4.4]{Tao:2007-247A-fourier-analysis}, for $2\le r<\infty$,
\begin{equation*}
\|D^{-s} e^{it\Delta}g_n^j(\psi^j)\|_{L^q_tL^r_x}\sim \frac
{1}{(|h_n^j\xi_n^j|)^s} \|e^{is\Delta} \psi^j\|_{L^q_{t'}
L^r_{x'}}\lesssim_{\psi^j} (|h_n^j\xi_n^j|)^{-s}\to 0
\end{equation*} as $n$ goes to infinity since $s>0$ and $\psi^j$ is assumed to be Schwartz.
In view of this, we will organize $D^{-s}g_n^j(\psi^j)$ into the error term
$e^N_n$. Hence the decomposition \eqref{eq:ss-profile-decom} is
obtained. Finally the $\dot{H}^s_x$-orthogonality
\eqref{eq:ss-pf-L2-ortho} follows from \eqref{eq:pf-L2-ortho}, and
the constraint \eqref{eq:ss-param-constrnt} from Definition
\ref{def:orthogonal} since $\xi_n^j\equiv 0$ for all $j,n$.
Therefore the proof of Theorem \ref{thm:pf-homog-sobolev} is
complete.
\end{proof}

\appendix
\section{Proof of the improved Sobolev embeddings} Here we include the arguments for the generalized Keraani's and Killip-Visan's improved Sobolev embeddings, which can be used
to derive Theorem \ref{thm:pf-homog-sobolev} as well. Firstly the
generalization of \eqref{eq:keraani-impv-sobolev} is as follows:
for any $1<r<\infty$, $s\ge 0$ and $\frac 1r+\frac sd=\frac 12$,
\begin{equation}\label{eq:g-keraani-impv-sobolev}
\|f\|_{L^r_x}\lesssim
\|D^sf\|_{L^2_x}^{1-2s/d}\|D^sf\|_{\dot{B}^0_{2,\infty}}^{2s/d}.
\end{equation} To prove it, we will closely follow the approach in \cite{Bahouri-Gerard:1999:profile-wave} by Bahouri and G\'erard.
\begin{proof}[Proof of \eqref{eq:g-keraani-impv-sobolev}]
For every $A>0$, we define $f_{<A}$, $f_{>A}$ via
\begin{equation*}
\hat{f}_{<A}(\xi):=1_{|\xi|\le A}\hat{f}(\xi), \hat{f}_{>A}(\xi)=1_{|\xi|> A}\hat{f}(\xi).
\end{equation*}
From the Riemann-Lebesgue lemma,
\begin{equation*}\|f_{<A}\|_{L^{\infty}_x}\lesssim \|\hat{f}_{<A}\|_{L^1_\xi}
\lesssim \sum_{k\in \Z\atop k\le K} \|1_{2^k\le |\xi|\le
2^{k+1}}\hat{f}\|_{L^1_\xi},
\end{equation*} where $K$ is the largest integer such that $2^{K}\le A$.
Then by the Cauchy-Schwarz inequality,
\begin{equation*}
\|1_{2^k\le |\xi|\le 2^{k+1}}\hat{f}\|_{L^1_\xi}\lesssim 2^{k(\frac d2-s)}\|D^s f\|_{\dot{B}^{0}_{2, \infty}}.
\end{equation*}
Since $\frac d2-s>0$ and $2^K\le A$,
 $$\|f_{<A}\|_{L^{\infty}_x}\le C A^{\frac d2-s}\|D^s
f\|_{\dot{B}^{0}_{2,\infty}}$$
for some $C>0$. We write \begin{equation*}
\|f\|^r_{L^r_x}\sim\int_0^{\infty} \lambda^{r-1} |\{x\in \R^d:
|f|>\lambda\}| d\lambda
\end{equation*}
and set
\begin{equation*}
A(\lambda):=\left(\frac {\lambda}{2C\|D^s f\|_{\dot{B}^{0}_{2, \infty}}}\right)^{\frac {1}{d/2-s}}.
\end{equation*}
Thus we obtain $$\|f_{<A}\|_{L^{\infty}_x}\le \frac
{\lambda}{2}.$$ On the other hand, the Chebyshev inequality gives
$$|\{|f|>\lambda\}|\le |\{f_{>A(\lambda)}>\frac \lambda
2\}|\lesssim \dfrac
{\|\hat{f}_{>A(\lambda)}\|^2_{L^2_x}}{\lambda^2}.$$ Hence we have
\begin{align*}
\|f\|^r_{L^r_x}&\lesssim \int_{0}^{\infty} \lambda^{r-3} \int_{|\xi|>A(\lambda)} |\hat{f}(\xi)|^2d\xi d\lambda \\
& \lesssim \int_{0}^{\infty} |\hat{f}(\xi)|^2 \int_{0}^{2C\|D^s
f\|_{\dot{B}^{0}_{2, \infty}}|\xi|^{d/2-s}}\lambda^{r-3}
 d\lambda d\xi\\
& \lesssim \|D^s f\|^{r-2}_{\dot{B}^{0}_{2, \infty}}\int_{0}^{\infty} |\xi|^{2s} |\hat{f}(\xi)|^2 d\xi \lesssim \|D^s f\|^2_{L^2_x} \|D^s f\|^{r-2}_{\dot{B}^{0}_{2,
\infty}}.
\end{align*}
Therefore the proof of \eqref{eq:g-keraani-impv-sobolev} is
complete.
\end{proof}
Next we prove the generalized version of
\eqref{eq:killip-visan-impv-sobobev}: for any $1<r<\infty$, $s\ge
0$ and $\frac 1r+\frac sd=\frac 12$,
\begin{equation}\label{eq:g-killip-visan-impv-sobobev}
\|f\|_{L^r_x}\lesssim \|D^sf\|_{L^2_x}^{1-2s/d}\sup_{k\in \Z}\|f_k\|_{L^r_x}^{2s/d},
\end{equation}  where $f_k$ is defined as above. For the proof, we will closely follow the approach in \cite{Killip-Visan:2008:clay-lecture-notes}. We first recall the Littlewood-Paley square function estimate \cite[p.267]{Stein:1993}.
\begin{lemma}\label{le:lw-square}
Let $1<p<\infty$. Then for any Schwartz function $f$,
$$\|f\|_{L^p_x}\sim \|(\sum_{k\in \Z}|f_k|^2)^{1/2}\|_{L^p_x},$$
where $f_k$ is defined as in the introduction.
\end{lemma}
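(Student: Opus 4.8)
The plan is to prove both halves of $\|f\|_{L^p_x}\sim\|(\sum_k|f_k|^2)^{1/2}\|_{L^p_x}$ by the classical randomization (Khintchine) argument, so that everything reduces to the scalar H\"ormander--Mikhlin multiplier theorem. I work with a smooth dyadic decomposition: fix $\psi\in C_c^\infty(\R^d)$ supported in the annulus $\{1/2\le|\xi|\le2\}$ with $\sum_{k\in\Z}\psi(2^{-k}\xi)=1$ for $\xi\neq0$, and set $\widehat{f_k}(\xi):=\psi(2^{-k}\xi)\hat f(\xi)$. Since the rough dyadic projections of the introduction differ from these smooth ones by operators bounded on every $L^p_x$, $1<p<\infty$, this entails no loss of generality.

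\emph{Forward inequality.} Recall Khintchine's inequality: for a square-summable sequence $(a_k)$ of complex numbers and independent Rademacher signs $(\eps_k)$, one has $\mathbf{E}_\eps\bigl|\sum_k\eps_k a_k\bigr|^p\sim_p\bigl(\sum_k|a_k|^2\bigr)^{p/2}$ for every $0<p<\infty$. Applying this pointwise in $x$ with $a_k=f_k(x)$ and integrating, $\|(\sum_k|f_k|^2)^{1/2}\|_{L^p_x}^p\sim_p\mathbf{E}_\eps\|\sum_k\eps_k f_k\|_{L^p_x}^p$, so it suffices to bound $\|\sum_k\eps_k f_k\|_{L^p_x}$ uniformly in the signs. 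But $\sum_k\eps_k f_k$ is the Fourier multiplier operator with symbol $m_\eps(\xi)=\sum_k\eps_k\psi(2^{-k}\xi)$; at each $\xi$ only boundedly many terms are nonzero, and since $|\partial^\alpha[\psi(2^{-k}\cdot)](\xi)|\lesssim_\alpha|\xi|^{-|\alpha|}$ on the support of $\psi(2^{-k}\cdot)$, the finite-overlap property gives the Mikhlin bounds $|\partial^\alpha m_\eps(\xi)|\lesssim_\alpha|\xi|^{-|\alpha|}$ with constants independent of $\eps$. Hence the H\"ormander--Mikhlin theorem yields $\|\sum_k\eps_k f_k\|_{L^p_x}\lesssim_p\|f\|_{L^p_x}$ uniformly in $\eps$, and averaging gives $\|(\sum_k|f_k|^2)^{1/2}\|_{L^p_x}\lesssim_p\|f\|_{L^p_x}$.

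\emph{Reverse inequality.} I dualize. Fix $\tilde\psi\in C_c^\infty$ with $\tilde\psi\equiv1$ on $\supp\psi$, supported in a slightly larger annulus, so that $\psi\tilde\psi=\psi$; for $g$ Schwartz set $\widehat{\tilde g_k}(\xi):=\tilde\psi(2^{-k}\xi)\hat g(\xi)$. Since $\sum_k\psi(2^{-k}\xi)\equiv1$ off the origin, for $f,g$ Schwartz we have $\langle f,g\rangle=\sum_k\langle f_k,g\rangle=\sum_k\langle f_k,\tilde g_k\rangle$, the last equality because the Fourier support of $f_k$ lies where $\tilde\psi(2^{-k}\cdot)\equiv1$. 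Cauchy--Schwarz in $k$ followed by H\"older in $x$ then give
\[
|\langle f,g\rangle|\le\Bigl\|\bigl(\textstyle\sum_k|f_k|^2\bigr)^{1/2}\Bigr\|_{L^p_x}\Bigl\|\bigl(\textstyle\sum_k|\tilde g_k|^2\bigr)^{1/2}\Bigr\|_{L^{p'}_x}\lesssim_p\Bigl\|\bigl(\textstyle\sum_k|f_k|^2\bigr)^{1/2}\Bigr\|_{L^p_x}\|g\|_{L^{p'}_x},
\]
where the final step is the forward inequality of the previous paragraph applied to the fattened pieces $\tilde g_k$ (their randomized multipliers $\sum_k\eps_k\tilde\psi(2^{-k}\cdot)$ satisfy the same uniform Mikhlin bounds). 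Taking the supremum over $\|g\|_{L^{p'}_x}=1$ gives $\|f\|_{L^p_x}\lesssim_p\|(\sum_k|f_k|^2)^{1/2}\|_{L^p_x}$, which together with the forward bound proves the lemma.

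\emph{Main obstacle.} The only genuinely non-routine point is the \emph{uniformity in the signs} of the H\"ormander--Mikhlin bound for $m_\eps$: one must check that the implied constant depends only on finitely many derivative seminorms of the fixed profile $\psi$ and on the (fixed) overlap multiplicity of the annuli, not on $\eps$ nor on how many dyadic scales are present; one also uses the finite overlap to see that $m_\eps\in L^\infty$ so that it is indeed a legitimate multiplier. An equivalent and perhaps cleaner route, avoiding randomization, is to invoke the vector-valued Calder\'on--Zygmund theory for the $\ell^2$-valued convolution kernel $(K_k)_{k\in\Z}$ with $\widehat{K_k}=\psi(2^{-k}\cdot)$, whose H\"ormander-type regularity is again uniform in $k$; in either formulation the crux is the same uniform multiplier/kernel estimate, after which the $L^p$ bound for $1<p<\infty$ and its dual follow formally, exactly as above.
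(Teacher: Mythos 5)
Your argument is correct, and it is the classical proof of the Littlewood--Paley square function estimate. For comparison: the paper does not prove this lemma at all --- it is quoted as a known result with a citation to Stein's book --- so there is no "paper proof" to match against; what you have written (Khintchine's inequality to reduce the forward bound to a uniform H\"ormander--Mikhlin estimate for the randomized multipliers $m_\eps$, then duality with fattened annuli for the reverse bound) is essentially the standard textbook argument, and you correctly identify the uniformity in the signs as the only point requiring care. One aside deserves a caveat: your claim that the rough projections $\hat f_k=\hat f\,1_{2^k\le|\xi|\le2^{k+1}}$ of the introduction "differ from the smooth ones by operators bounded on every $L^p_x$" is false for $d\ge2$ and $p\ne2$, since by Fefferman's ball multiplier theorem the indicator of an annulus is not an $L^p$ multiplier there (and indeed the rough square function estimate itself can fail in higher dimensions). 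This does not damage your proof, because the paper's own footnote makes clear that $f_k$ is to be understood via smooth cut-offs, which is exactly the setting in which you work; but the reduction should be phrased as "we use the smooth definition, as the paper intends," not as an equivalence of the two definitions.
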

\begin{proof}[Proof of \eqref{eq:g-killip-visan-impv-sobobev}]
By Lemma \ref{le:lw-square}, we see that
\begin{equation}\label{eq:loc-5}
\|f\|^r_{L^r_x}\sim \int\left(\sum_{M\in \Z} |f_M|^2\right)^{r/2}dx.
\end{equation}
When $r\le 4$, we have $d-4s\ge 0$. Then the H\"older inequality and the Bernstein inequality
yield,
\begin{align*}
\|f\|^r_{L^r_x}
&\sim \int \left(\sum_{M\in \Z} |f_M|^2\right)^{r/4}\left(\sum_{N\in \Z} |f_N|^2\right)^{r/4}dx\\
&\lesssim \sum_{M\le N} \int |f_M|^{r/2}|f_N|^{r/2}dx\\
&\lesssim \sum_{M\le N} \|f_M\|_{L^{\frac {2d}{d-4s}}_x} \||f_M|^{r/2-1}\|_{L^{\frac ds}_x}
\||f_N|^{r/2-1}\|_{L^{\frac ds}_x}\|f_N\|_{L^2_x}\\
&\lesssim \left(\sup_{k\in \Z}\|f_k\|_{L^r_x}\right)^{r-2}\sum_{M\le N}\|f_M\|_{L^{\frac {2d}{d-4s}}_x}
\|f_N\|_{L^2_x}\\
&\lesssim \left(\sup_{k\in \Z}\|f_k\|_{L^r_x}\right)^{r-2} \sum_{M\le N} N^{-s}M^s\|D^sf_M\|_{L^2_x}\|D^sf_N\|_{L^2_x}.
\end{align*}
Then the Schur's test concludes the proof when $r\le 4$. On the
other hand, when $r>4$, we let $r^*=[r/2]$, the largest integer
which is less than $r/2$. Still by \eqref{eq:loc-5}, the H\"older
inequality and the Bernstein inequality, we have
\begin{align*}
\|f\|^r_{L^r_x}&\sim \int \left(\sum_{M_1}|f_{M_1}|^2\right)\left(\sum_{M_2}|f_{M_2}|^2\right)^{r^*-1}
\left(\sum_{M}|f_{M}|^2\right) ^{r/2-r^*} dx\\
&\lesssim \sum_{M_1\le M_2\le M_3\le\cdots\le M_{r^*-1}\le M} \int |f_{M_1}||f_{M_1}|f_{M_2}|^2|f_{M_3}|^2 \times\cdots\times|f_{M_{r^*-1}}|^2|f_{M}|^{r-2r^*}dx\\
&\lesssim \sum_{M_1\le M_2\le M_3\le\cdots\le M_{r^*-1}\le M}\|f_{M_1}\|_{L^r_x}\|f_{M_1}\|_{L^\infty_x}
\|f_M\|^{r-2r^*}_{L^r_x}\|f_{M_2}\|_{L^{r/2}_x}\times\\
&\times\|f_{M_2}\|_{L^r_x}\|f_{M_3}\|^2_{L^r_x}\cdots\|f_{M_{r^*-1}}\|^2_{L^r_x}\\
&\lesssim\left(\sup_{k\in \Z}\|f_k\|_{L^r_x}\right)^{r-2}\sum_{M_1\le M_2}M_1^{d/2-s}M_2^{s-d/2}
\|D^sf_{M_1}\|_{L^2_x}\|D^sf_{M_2}\|_{L^2_x}.
\end{align*}
Since $d/2-s>0$, the Schur's test again concludes the proof. Hence the proof of \eqref{eq:g-killip-visan-impv-sobobev} is complete.

\end{proof}

\bibliography{refs}
\bibliographystyle{plain}
\end{document}